\DeclareSymbolFont{rsfscript}{OMS}{rsfs}{m}{n}
\DeclareSymbolFontAlphabet{\mathrsfs}{rsfscript}
\theoremstyle{plain}
\newtheorem{theorem}{Theorem}
\newtheorem{lemma}[theorem]{Lemma}
\theoremstyle{definition}
\newtheorem{remark}[theorem]{Remark}
\newtheorem{definition}[theorem]{Definition}
\title{Fast synchronization of inhomogenous random automata}
\author{Balázs Gerencsér\thanks{B.\ Gerencs\'er is with the Alfr\'ed R\'enyi Institute of Mathematics, Budapest, Hungary and E\"otv\"os Lor\'and University, Department of Probability and Statistics, Budapest, Hungary, {\tt\small gerencser.balazs@renyi.hu} He was supported by NRDI (National Research, Development and Innovation Office) grant KKP 137490 and the J\'anos Bolyai Research Scholarship of the Hungarian Academy of Sciences.}, %
\space Zsombor Várkonyi\thanks{Zs.\ Várkonyi is with the E\"otv\"os Lor\'and University, Budapest, Hungary, {\tt\small zsvarkonyi@student.elte.hu} Supported by the ÚNKP-21-6-ELTE-1153 New National Excellence Program of the Ministry for Innovation and Technology from the source of the NRDI Fund.}}
\date{\today}
\begin{document}
\maketitle
\begin{abstract}
    We examine the reset threshold of randomly generated deterministic automata. We present a simple proof that an automaton with a random mapping and two random permutation letters has a reset threshold of $\mathcal{O}\big( \sqrt{n \log^3 n} \big)$ with high probability, assuming only certain partial independence of the letters. Our observation is motivated by Nicaud \cite{nicaud2019vcerny} providing a near-linear bound in the case of two random mapping letters, among multiple other results. 
    The upper bound for the latter case has been recently improved by the breakthrough work of Chapuy and Perarnau \cite{chapuy2023short} to $\mathcal{O}(\sqrt{n} \log n)$.
\end{abstract}



\section{Introduction}
The synchronization of automata has been intensively studied in the literature since the sixties. Recently, a related area has been spotlighted, namely synchronizing random automata, viewing automata theory in a probabilistic way.

For a complete finite automaton, a word is called a \textit{synchronizing word} if it maps to a state independent of the initial state. The \textit{reset threshold} of an automaton with a synchronizing word is the length of its shortest synchronizing word. In 1964, Černý conjectured \cite{vcerny1964poznamka} that if an $n$-state automaton can be synchronized, then its reset threshold is at most $(n-1)^2$. This is an open question ever since, however some partial results were proven over the years. Firstly, this is a strict bound, meaning that for all $n\in \mathbb{N}$ there is an $n$-state automaton with reset threshold exactly $(n-1)^2$. The best known upper bound was attained by Shitov \cite{shitovimprovement} which improved the previous upper bound of Szykula \cite{szykula2018improving}. Both of these results provide a bound of $\Theta (n^3)$ and only differ by the leading coefficient. The first cubic upper bound with leading coefficient near $1/6$ was reached by Pin and Frankl \cite{FRANKL1982125}, \cite{pin1983two}.

A strongly related area of study is the investigation of randomly generated deterministic automata, in which the aim is to prove statements that hold with high probability for a certain class of automata. Most of these works concern automata with independent letters.

On the analytical side, Nicaud \cite{nicaud2019vcerny} showed that an automaton with at least two random mapping letters in its alphabet has reset threshold of $\mathcal{O}(n \log^3 n)$ with high probability. Recent related results include the paper of Catalano and Jungers \cite{catalano2018randomized} in which the positivity of exponents of quasi-permutation matrices is examined and the article of Chapuy and Perarnau \cite{chapuy2023short} in which an $\mathcal{O}(\sqrt{n} \log n)$ upper bound is proven for random automata with two random mappings.

From a numerical perspective, Skvortsov and Tipikin performed an experimental study \cite{skvortsov2011experimental} and conjectured that the expected value of the reset threshold of an $n$-state finite automaton is sublinear, more precisely $\mathcal{O}(n^{0.55})$. Kisielewicz et al.\ examined \cite{kisielewicz2013fast} a large number of random automata up to at most 300 states and conjectured that the expected reset threshold of a random automata with two random mappings is $\mathcal{O}(\sqrt{n})$. These results show that the upper bound of \cite{chapuy2023short} is expected to be tight up to logarithmic factors. 

In this paper, we are going to examine a class of letter-inhomogenous random automata and prove that for an $n$-state automaton of this class, its reset threshold is $ \mathcal{O} \big( \sqrt{n \log^3 n} \big)$ with high probability. This result remains interesting on its own even in view of a possible worst-case quadratic bound of the Černý conjecture as the target high probability bound is substantially lower. 

\section{Definitions and Notations}

A deterministic finite automaton is denoted by $\mathrsfs{A}=(Q,\Sigma, \delta)$ where the set of states is $Q$, the set of letters is $\Sigma$ and the transition funcion of letters over the states is $\delta : Q \times \Sigma \rightarrow Q$. A \textit{word} is a finite series of not necessarily distinct letters, formally words are the elements of $\Sigma ^ k$ for arbitrary $k\in \mathbb{N}$. The transition function $\delta$ extends to words in a natural way. Automaton $\mathrsfs{A}$ is called \textit{synchronizing} if a word $u$ exists such that the action of word $u$ leaves the automaton in a state independent of the initial state, that is $\exists S \in Q , \forall T \in Q: \; \; S=\delta(T, u)$. A word $u$ with this property is called a \textit{synchronizing word}. A series of events $(A_n)_{n \in \mathbb{N}}$ is said to hold \emph{with high probability} if $P(A_n) \longrightarrow 1$.


A randomly generated deterministic automaton can be built up by random letters of different nature. Let us present the different types of random letters we will be using.

\begin{definition}\label{random_mapping_pmapping}
Let us define \emph{random mapping}, \emph{random p-mapping} and \emph{random permutation} as possible letters of a finite automaton. Let $Q=\{1,2,...n\}$.
\begin{itemize}
    \item $f : Q \rightarrow Q$ is called a \emph{random mapping} if $f(i)$ is chosen uniformly randomly from $Q$, independently for all $i=1,2,...n$.
    
    \item  Given a probability mass function $p$ on $Q$, $g : Q \rightarrow Q$ is a \textit{random p-mapping} if $g(i)$ is chosen from $Q$ according to the probability $p$ independently for all $i=1,2,...n$.
    
    \item $h : Q \rightarrow Q$ is a \emph{random permutation} if it is chosen uniformly randomly from the permutation group $S_n$.
\end{itemize}
\end{definition}

Note that an automaton with random mapping, random p-mapping or random permutation letters still has deterministic actions for every realisation of the letters.

Considering the structure of random letters, we introduce the following concepts. For a random mapping $f$ a state $S$ is a \emph{cyclic point} if $f^i(S)=S$ for some $i>0$. The \emph{height} of a state $T$ is the smallest $i\ge 0$ such that $f^i(T)$ is a cyclic point. The height of $f$ is maximal height among all states.

\section{Tools}
Before presenting our main result, let us cite two important lemmas which will be useful for us. The first one is about random mappings from Nicaud \cite{nicaud2019vcerny}: 
\begin{lemma}[Lemma 4 in \cite{nicaud2019vcerny}] \label{Nicaud_lemma}
The probability that a random $p$-mapping of size $n$ has more than $2\sqrt{n\log n}$ cyclic points or that it has height greater than $2\sqrt{n\log n}$ is $\mathcal{O}(\frac{1}{n})$.
\end{lemma}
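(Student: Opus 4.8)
The plan is to reduce both assertions to a single birthday-paradox estimate for independent draws from $p$, exploiting the following exposure principle: if one follows a trajectory $x_0,\, x_1=g(x_0),\, x_2=g(x_1),\dots$ and $x_0,\dots,x_k$ are pairwise distinct, then $x_{k+1}=g(x_k)$ is a fresh sample from $p$, independent of $x_1,\dots,x_k$, since $x_k$ is an argument at which $g$ has not yet been evaluated. Thus, until the first repetition, a trajectory behaves exactly like a sequence of i.i.d.\ $p$-samples, and both the tail length of a point and the length of the cycle it eventually enters are at most the index of the first collision in such a sequence. A purely combinatorial observation will then let us handle an arbitrary mass function $p$: the probability that $m$ i.i.d.\ $p$-samples are pairwise distinct equals $m!\,e_m(p_1,\dots,p_n)$, where $e_m$ is the $m$-th elementary symmetric polynomial, and since $e_m$ is Schur-concave on the simplex it is maximized at the uniform law. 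Hence for every $p$,
\[
  \Pr(m \text{ samples pairwise distinct}) \;\le\; \prod_{j=1}^{m-1}\Bigl(1-\tfrac{j}{n}\Bigr) \;\le\; \exp\!\Bigl(-\tbinom{m}{2}/n\Bigr),
\]
so the whole argument reduces to the uniform worst case.

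For the height, fix a state $x$ and set $m=2\sqrt{n\log n}$. By the exposure principle the trajectory from $x$ produces fresh $p$-samples until its first self-intersection, and the height of $x$ is at most the number of draws preceding that intersection. The event that no collision occurs within $m$ draws is contained in $\{x_0,\dots,x_m \text{ pairwise distinct}\}$, whose probability is at most $\exp(-\binom{m+1}{2}/n)=\mathcal O(1/n^2)$ by the display above. A union bound over the $n$ choices of $x$ shows that every state reaches the cyclic part within $2\sqrt{n\log n}$ steps except with probability $\mathcal O(1/n)$, giving the desired control on the height.

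For the number of cyclic points $C$, note first that the first moment is only $\mathcal O(\sqrt n)$, so Markov's inequality cannot reach $\mathcal O(1/n)$ and genuine concentration is required. The plan is to adapt the classical fact that, for a uniform mapping, $C$ is distributed as the number of distinct values seen before the first collision in a birthday process, so that $\Pr(C\ge m)=\prod_{j=1}^{m-1}(1-j/n)$. Concretely one enumerates the cyclic points by a sequential exploration of the functional graph, growing a path by fresh $p$-samples and recording a cycle exactly when a sample lands on the current path, after which the path restarts from an unvisited state; a new cyclic point appears only at such a closure, and in the uniform case the $j$-th closure opportunity succeeds with probability $j/n$. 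Transferring this to an upper bound valid for every $p$ via the Schur-concavity comparison yields $\Pr(C\ge m)\le\prod_{j=1}^{m-1}(1-j/n)\le\exp(-\binom{m}{2}/n)$, which for $m=2\sqrt{n\log n}$ is $\mathcal O(1/n^2)$. Adding the two failure probabilities gives the claimed $\mathcal O(1/n)$.

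I expect the main obstacle to be the cyclic-point tail rather than the height. Making the sequential exploration rigorous requires care: one must verify that across the different tree components the per-step closing probability really is at least $j/n$ after $j-1$ cyclic points have appeared, and that the Schur-concavity comparison with the uniform law survives the conditioning inherent in the exploration. Once this combinatorial inequality (the $p$-mapping analogue of the exact law $\Pr(C\ge m)=\prod_{j=1}^{m-1}(1-j/n)$ for uniform mappings) is in place, the remaining estimates are the routine birthday-paradox computations indicated above.
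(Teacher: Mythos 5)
First, a point of context: the paper you are writing into does not prove this lemma at all --- it is imported verbatim as Lemma 4 of Nicaud \cite{nicaud2019vcerny} --- so your attempt can only be measured against the known argument, not an in-paper one. Your height half is correct and is essentially the standard route: until the first repetition the trajectory $x_1, x_2, \dots$ is an i.i.d.\ $p$-sequence, and your comparison with the uniform law via $\Pr(m \text{ samples distinct}) = m!\, e_m(p_1,\dots,p_n)$ together with Schur-concavity of $e_m$ is exactly the right tool; in particular it correctly sidesteps the tempting pointwise bound ``the next sample avoids the $k$ values seen so far with probability at most $1-k/n$'', which is simply false for non-uniform $p$ (the mass of $p$ may avoid the visited set). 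One small slip: $x_0$ is a fixed state, not a $p$-sample, so you are only entitled to the exponent $\binom{m}{2}$, not $\binom{m+1}{2}$; this is harmless, since $\exp\bigl(-\binom{m}{2}/n\bigr) = \mathcal{O}(n^{-2})$ already holds for $m = 2\sqrt{n\log n}$, and the union bound over the $n$ starting states then gives $\mathcal{O}(1/n)$ as you say.

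The cyclic-point half, however, contains a genuine gap --- one you candidly flag yourself: the inequality $\Pr(C\ge m)\le\prod_{j=1}^{m-1}(1-j/n)$ for general $p$ is asserted as a plan, not proved, and the exploration invariant you propose is wrong as stated. When a walk closes onto its own fresh path it creates an entire cycle's worth of cyclic points at once, not one; and the closing probability at a given step is governed by the $p$-mass of the \emph{current path} (its length, in the uniform case), not by the number $j-1$ of cyclic points found so far, so ``the $j$-th closure opportunity succeeds with probability $j/n$'' is not a correct description even for uniform mappings --- the exact uniform law is classically obtained by counting (the number of mappings whose cyclic set is a prescribed $k$-set is $k!\cdot k\,n^{n-k-1}$), not by this process. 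Fortunately, your own Schur-concavity lemma closes the gap by a first-moment bound that bypasses the exploration entirely: if $C\ge m$, then the union of all cycles is a set $S$ with $|S|=C\ge m$ on which $g$ restricts to a permutation of $S$, and for a fixed $S$ with $|S|=k$ one has $\Pr(g|_S \text{ is a permutation of } S) = k!\prod_{s\in S}p_s$; hence
\begin{equation*}
  \Pr(C\ge m) \;\le\; \sum_{k=m}^{n} \sum_{|S|=k} k!\prod_{s\in S}p_s \;=\; \sum_{k=m}^{n} k!\, e_k(p) \;\le\; \sum_{k=m}^{n} \prod_{j=0}^{k-1}\Bigl(1-\tfrac{j}{n}\Bigr) \;\le\; \frac{n}{m}\,\exp\Bigl(-\tbinom{m}{2}/n\Bigr) \;=\; \mathcal{O}\bigl(n^{-3/2}\bigr),
\end{equation*}
where the geometric-type tail uses that consecutive terms shrink by a factor at most $1-m/n$. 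Combined with your height estimate this yields the claimed $\mathcal{O}(1/n)$; with this substitution the argument is complete, but as written the cyclic-point tail remains an unproven sketch resting on an incorrect process description.
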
 
In \cite{nicaud2019vcerny} this lemma was used to prove that that a random automaton with at least two random mapping letters in its alphabet has a reset threshold at most $\mathcal{O}(n \log^3 n)$ with high probability. Note that by repeating the same random p-mapping letter several times, the number of possible states can be significantly decreased. 

In their paper \cite{friedman1998action}, Friedman and his coauthors proved a general statement about the transitivity of random permutations. Their result is the following:
\begin{lemma}\label{permutations_lemma}
For every $r$ and $d\ge 2$ there is a $C$ such that for uniformly random choices of $d$ permutations $\pi_1, \pi_2, \dots , \pi_d$ of $S_n$, the following holds with high probability: for any two $r$-tuples of distinct elements in $\{ 1,\dots ,n\}$, there is a product of at most $C \log n$ of the given permutations which map the first $r$-tuple to the second.
\end{lemma}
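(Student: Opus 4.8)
The statement is equivalent to a diameter bound on a Schreier graph, so I would first translate it into that language. Let $X_r$ denote the set of ordered $r$-tuples of distinct elements of $\{1,\dots,n\}$, so that $|X_r| = n!/(n-r)! = \Theta(n^r)$ and hence $\log|X_r| = \Theta(\log n)$ for fixed $r$. The permutations $\pi_1,\dots,\pi_d$ act coordinatewise on $X_r$, and this action is generated by the $2d$ maps $\pi_1^{\pm1},\dots,\pi_d^{\pm1}$ (invertibility is automatic since each $\pi_j$ is a bijection). I would build the Schreier graph $\Gamma$ with vertex set $X_r$ and an edge from $x$ to $g(x)$ for each generator $g$; it is $2d$-regular, and connected exactly when the $\pi_j$ act transitively on $X_r$. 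A product of at most $C\log n$ generators carrying one $r$-tuple to another is precisely a walk of length at most $C\log n$ in $\Gamma$, so the lemma reduces to showing that with high probability $\operatorname{diam}(\Gamma) = O(\log n)$.

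The plan is to derive the diameter bound from a spectral gap. I would show that with high probability the adjacency operator of $\Gamma$ has all nontrivial eigenvalues bounded away from $2d$ by a constant $\epsilon = \epsilon(r,d) > 0$; that is, $\Gamma$ is an \emph{expander}. Given such a gap, a standard ball-growth argument finishes the job: the expander mixing lemma (or Cheeger's inequality) yields vertex expansion, so the ball $B_\ell(x)$ of radius $\ell$ around any $x$ satisfies $|B_{\ell+1}(x)| \ge (1+c)|B_\ell(x)|$ as long as $|B_\ell(x)| \le |X_r|/2$. Hence after $\ell_0 = O(\log|X_r|) = O(\log n)$ steps every ball covers more than half of $X_r$; since $\Gamma$ is undirected (each inverse is also a generator), for any two tuples $s,t$ the balls $B_{\ell_0}(s)$ and $B_{\ell_0}(t)$ each exceed half the vertices and therefore intersect, producing a walk of length at most $2\ell_0 = O(\log n)$ from $s$ to $t$. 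This handles all pairs of tuples at once, so no union bound over the $\Theta(n^{2r})$ pairs is required once the gap is in hand.

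The real content, and the main obstacle, is establishing the spectral gap for random $\pi_j$. I would attack it with the trace (moment) method. Writing $A$ for the adjacency matrix, one has $\operatorname{tr}(A^{2k}) = (2d)^{2k} + \sum_{i\ge2}\lambda_i^{2k}$, and combinatorially $\operatorname{tr}(A^{2k})$ counts pairs $(x,w)$ where $w$ is a length-$2k$ word in the generators with $w(x)=x$. The words reducing to the identity in the free group fix every $x$ and reproduce the spectral contribution of the $2d$-regular tree, while all other closed walks arise only through random coincidences, and the crux is to show that their expected contribution is negligible. The technical heart is to group walks by the \emph{shape} of their trajectory in $X_r$ (the quotient multigraph recording how the walk self-intersects) and to prove that each independent cycle forced into this shape costs a factor $\Theta(1/n)$ in probability while contributing only a bounded combinatorial multiplicity; summing over shapes then bounds $\mathbb{E}\big[\sum_{i\ge2}\lambda_i^{2k}\big]$, and taking $k$ to grow slowly with $n$ together with Markov's inequality converts this into the desired uniform gap $|\lambda_i| \le 2d-\epsilon$ for $i\ge2$ with high probability. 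This counting of degenerate walks and the careful accounting of coincidence probabilities for products of random permutations is exactly where the difficulty concentrates, and it is also the step that forces the hypotheses $d\ge2$ (a single permutation gives a disjoint union of cycles with no gap) and fixed $r$ (so that $|X_r|$ remains polynomial and $\log|X_r| = \Theta(\log n)$).
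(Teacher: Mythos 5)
A point of orientation first: the paper does not prove Lemma \ref{permutations_lemma} at all --- it is quoted verbatim from Friedman et al.\ \cite{friedman1998action} --- so the only meaningful comparison is with the proof in that reference. Your strategy (pass to the Schreier graph on $r$-tuples, establish a spectral gap by the trace method in the spirit of Broder--Shamir, then convert the gap into an $O(\log |X_r|) = O(\log n)$ diameter bound) is essentially the route taken there, and your reduction and ball-growth endgame are fine. The problem is that, as a proof, your text is a roadmap rather than an argument: the entire content of the lemma is concentrated in the paragraph you yourself label ``the technical heart,'' and there you only assert, without proof, that each forced coincidence costs $\Theta(1/n)$ with bounded multiplicity and that the sum over shapes is negligible. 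For the action on $r$-tuples this is precisely where the difficulty lives: one step of the walk moves all $r$ coordinates under the \emph{same} random permutation, so coincidence events at the tuple level are correlated point-level events and cannot be treated as independent $\Theta(1/n)$ factors without a careful conditioning argument; moreover the number of shapes grows superexponentially in $k$, and with $k = \Theta(\log n)$ (which you need for Markov's inequality to beat $(2d-\epsilon)^{-2k}$ against $|X_r| = \Theta(n^r)$ vertices) the multiplicity and probability estimates must be balanced quantitatively, not qualitatively. None of this is carried out, so the spectral gap --- and hence the lemma --- is not actually established by your sketch.

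There is also a substantive mismatch you should fix even at the level of the statement being proved: your Schreier graph is generated by $\pi_1^{\pm1},\dots,\pi_d^{\pm1}$, and your ball-intersection argument uses undirectedness essentially, so what your outline would deliver is a short word in the permutations \emph{and their inverses}. The lemma speaks of a product of the given permutations, and the application in the paper's main theorem genuinely requires positive words: $b$ and $c$ are letters of an automaton, and an automaton has no way to apply $b^{-1}$. The standard repair is to work with the non-symmetric transition operator $P = \frac{1}{d}\sum_{j} P_{\pi_j}$ and bound its second-largest \emph{singular} value on the orthogonal complement of the constants (e.g., by running the trace method on $(P^*P)^k$), which controls the mixing of the directed walk and yields a directed diameter $O(\log n)$; this is compatible with your framework but is a different computation from the symmetrized one you set up, and it should be stated explicitly rather than inherited silently from the undirected picture.
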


\section{Main Result}

\begin{theorem}
Let $\mathrsfs{A}$ be a randomly generated deterministic complete automaton with $|Q| = n$ and $\Sigma = \{ a, b, c\}$. Let $a$ be a random mapping, $b$ and $c$ be random permutations such that $b$ and $c$ are independent. The reset threshold of $\mathrsfs{A}$ is $\mathcal{O}\big( \sqrt{n \log^3 n} \big)$ with high probability.
\end{theorem}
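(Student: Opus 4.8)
The plan is to synchronize $\mathrsfs{A}$ in two phases: first contract $Q$ onto the cyclic points of the random mapping $f$ by iterating $f$, and then repeatedly merge pairs of remaining states by steering them onto a \emph{single fixed collision} of $f$ with the help of the two random permutations, which I will call $\sigma$ and $\tau$. Throughout I would condition on three events holding simultaneously: (i) by Lemma \ref{Nicaud_lemma}, $f$ has at most $m := 2\sqrt{n\log n}$ cyclic points and height at most $m$, which fails with probability $\mathcal{O}(1/n)$; (ii) $f$ is not a bijection, so there exist distinct states $a\neq b$ with $f(a)=f(b)$, which fails with probability $n!/n^n$, negligible; and (iii) by Lemma \ref{permutations_lemma} applied with $r=2$ and $d=2$, there is a constant $C$ such that \emph{any} ordered pair of distinct states can be sent to \emph{any} other ordered pair of distinct states by a word in $\sigma,\tau$ of length at most $C\log n$. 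A union bound on the three complements shows all of (i)--(iii) hold with high probability, and I would carry out the rest of the argument deterministically under this conditioning.

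For the contraction phase, I would use that applying a letter to a set of states never increases its cardinality, and that under (i) every state reaches a cyclic point within the height $h\le m$ of $f$. Writing $R$ for the set of cyclic points, one has $Q\cdot f^{m}=R$, since $Q\cdot f^{h}\subseteq R$ while $R\cdot f^{h}=R$ (as $f$ permutes $R$), and $|R|\le m$. This phase is just the prefix $f^{m}$, of length $m=\mathcal{O}\big(\sqrt{n\log n}\big)$.

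For the merging phase, I would maintain the current image $S$, initialized to $R$. While $|S|\ge 2$, pick any two distinct $p,q\in S$ and, invoking (iii), a word $w$ in $\sigma,\tau$ of length at most $C\log n$ with $\delta(p,w)=a$ and $\delta(q,w)=b$. Since $w$ acts bijectively, $S\cdot w$ has the same size as $S$ and contains both $a$ and $b$; applying the single letter $f$ then sends $a$ and $b$ to the common state $f(a)=f(b)$, so $|S\cdot wf|\le |S|-1$. The decisive feature is that only one copy of $f$ is spent per merge: because the permutations can reposition arbitrary distinct states rather than only cyclic ones, there is no need to re-contract onto $R$ between merges. Hence at most $|R|-1\le m$ rounds reduce $|S|$ to $1$, each round contributing at most $C\log n+1$ letters. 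The merging phase therefore has length $\mathcal{O}(m\log n)=\mathcal{O}\big(\sqrt{n\log n}\cdot\log n\big)=\mathcal{O}\big(\sqrt{n\log^{3}n}\big)$, which dominates the $\mathcal{O}(\sqrt{n\log n})$ contraction prefix, and the concatenated word synchronizes $\mathrsfs{A}$.

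The main obstacle — and precisely the reason the bound sharpens from Nicaud's $\mathcal{O}(n\log^{3}n)$ to $\mathcal{O}(\sqrt{n\log^{3}n})$ — is justifying that a single application of $f$ suffices per merge, rather than a full recontraction of length $\sqrt{n\log n}$. Making this rigorous amounts to checking that the merging invariant is preserved across rounds: that $S$ always contains two distinct states (guaranteed while $|S|\ge 2$) and that the fixed collision pair $(a,b)$ from (ii) remains usable at every step (which it does, being a property of the letter $f$ alone). One must also note that the reachability guarantee of Lemma \ref{permutations_lemma} is a \emph{single} high-probability event that is reused over all rounds, so no union bound over the $\mathcal{O}(\sqrt{n\log n})$ rounds is incurred; and that the lemma is only ever invoked with the constant value $r=2$, so its constant $C$ is independent of $n$.
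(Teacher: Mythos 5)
Your proposal is correct and follows essentially the same strategy as the paper: contract with $\approx 2\sqrt{n\log n}$ applications of the mapping letter via Lemma \ref{Nicaud_lemma}, then repeat at most that many rounds in which Lemma \ref{permutations_lemma} (with $r=d=2$) steers an arbitrary pair of surviving states onto a fixed colliding pair in $C\log n$ permutation letters, followed by a single application of the mapping letter to merge them. The only cosmetic differences are that you condition explicitly on the mapping not being a bijection (the paper instead infers the existence of a merged pair from the contraction itself) and that you observe the image after the prefix is exactly the set of cyclic points, neither of which changes the argument.
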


\begin{remark}
Let us highlight the most important differences between the model we investigate and the usual model of independent uniform random mappings. The independence of the two random permutations guarantee that our automaton is strongly connected with high probability \cite{friedman1998action}, moreover generates $S_n$ or $A_n$ with high probability \cite{dixon1969probability}, neither of which holds for an automaton with three random mappings. 
The idea of using permutations is motivated by the work of Nicaud and Berlinkov \cite{berlinkov2020synchronizing}.
\end{remark}

\begin{proof}
Concerning Lemma \ref{permutations_lemma}, let us fix $d=r=2$, with the resulting constant being $C$. 
Let $R_n$ denote the event that both the conclusions of Lemma \ref{Nicaud_lemma} and Lemma \ref{permutations_lemma} hold true for our automaton. 
Since both conclusions hold with high probability, it follows that $R_n$ holds with high probability as well.
For simplicity, let us use the notation ${H}(n)=2\sqrt{n\log n}$. 

First we apply $a^{H(n)}$ to the entire set $Q$. By Lemma \ref{Nicaud_lemma}, after applying $a^{H(n)}$, the maximal number of possible states is $H(n)$ on $R_n$, from which we proceed as follows.
Thanks to $a$ contracting the original set of possible states, we may choose two states ($P_1$ and $P_2$) that are being merged by $a$.
Formally, $\exists P_1, P_2 \in Q : \delta(P_1,a) = \delta(P_2,a)$.

We repeat the following procedure until the automaton is synchronized:

\begin{itemize}
    \item We search for two different possible states of the automaton. If there is only one, then the automaton is synchronized and we are done. Let these two different states be named $T_1, T_2$.
    \item Now we will map $(T_1,T_2)$ pair to $(P_1,P_2)$ pair. By Lemma \ref{permutations_lemma} on $R_n$, we can do this step in $C \log n$ transitions.
    \item Now apply letter $a$ and this reduces the number of possible states by at least one, since $P_1$ and $P_2$ are merged by $a$.
\end{itemize}

We have to repeat this process at most $H(n)-1$ times as there were at most $H(n)$ possible states on $R_n$ after applying $a^{H(n)}$. The upper bound for the number of letters needed is: $H(n)+(H(n)-1)(C \log n +1) \sim CH(n)\log n = \mathcal{O}\big( \sqrt{n \log^3 n} \big)$. Here we calculated the length of the word using the idea of Lemma 2 in \cite{nicaud2019vcerny}, namely that pairwise synchronization is sufficient for the entire automaton to be synchronized. 

Note that the subset $\delta(Q,a^{H(n)})$ and the functions $(b,c)$ might not be independent, but we only use the consequences of the lemmas via assuming $R_n$, no further structure is exploited.
As this event holds with high probability, this confirms the claim.
\end{proof}

\bibliographystyle{plain}
\bibliography{references}

\end{document}